
\documentclass[12pt]{article}
\usepackage{amsthm,amsfonts, amsbsy, amssymb,amsmath,graphicx}
\usepackage{graphics}
\usepackage[english]{babel}
\usepackage{graphicx}
\usepackage{lineno}
\usepackage{enumerate}
\usepackage{tikz}
\usepackage{tkz-graph}
\usepackage{hyperref}
\usepackage{color}
\usepackage{tikz}
\usepackage{algorithm}
\usepackage{enumitem}
\usetikzlibrary{arrows.meta,shapes.arrows}

\hypersetup{colorlinks=true, linkcolor=blue, citecolor=red,urlcolor=blue}

\newtheorem{theorem}{Theorem}
\newtheorem{lemma}[theorem]{Lemma}

\newtheorem{corollary}[theorem]{Corollary}

\newtheorem{remark}{Remark}

\addtolength{\hoffset}{-1.7cm}
\addtolength{\textwidth}{3.5cm}
\addtolength{\voffset}{-1.7cm} \addtolength{\textheight}{2cm}

\newcommand{\edim}{\operatorname{edim}}

\title{Edge metric dimension of some graph operations}

\author{Iztok Peterin$^{(1,2)}$ and Ismael G. Yero$^{(3)}$\\
\\
$^{(1)}${\small University of Maribor,} {\small Faculty of Electrical Engineering and Computer Science}\\
{\small Koro\v{s}ka cesta 46, 2000 Maribor, Slovenia.} \\
$^{(2)}${\small Institute of Mathematics, Physics and Mechanics}\\
{\small Jadranska ulica 19, 1000 Ljubljana, Slovenia.} \\
{\small\it iztok.peterin\@@um.si} \\
$^{(3)}${\small Universidad de C\'adiz, Departamento de Matem\'aticas}\\
{\small} {\small Escuela Polit\'ecnica Superior de Algeciras, Av. Ram\'on Puyol s/n, 11202 Algeciras, Spain.} \\
{\small\it ismael.gonzalez\@@uca.es}
}

\date{}

\begin{document}

\maketitle

{\ }

\begin{abstract}
\noindent
Let $G=(V, E)$ be a connected graph. Given a vertex $v\in V$ and an edge $e=uw\in E$, the distance between $v$ and $e$ is defined as $d_G(e,v)=\min\{d_G(u,v),d_G(w,v)\}$. A nonempty set $S\subset V$ is an edge metric generator for $G$ if for any two edges $e_1,e_2\in E$ there is a vertex $w\in S$ such that $d_G(w,e_1)\ne d_G(w,e_2)$. The minimum cardinality of any edge metric generator for a graph $G$ is the edge metric dimension of $G$. The edge metric dimension of the join, lexicographic and corona product of graphs is studied in this article.

\medskip
\noindent{\it Keywords:} edge metric dimension; join of graphs; lexicographic product graphs; corona graphs

\medskip
\noindent{\it AMS Subject Classification Numbers:} 05C12; 05C76
\end{abstract}

\hrule

\section{Introduction and preliminaries}

Nowadays several parameters related to distances in graphs are highly attracting the attention of several researchers. One of them, namely, the metric dimension, has specifically centered several investigations. To see the richness of this topic, among several possible references, we would suggest for instance the three Ph. D. dissertations \cite{ale}, \cite{dk} and \cite{yunior}, and references cited therein. In this concern, a vertex $v$ of a connected graph $G$ \emph{distinguishes} (\emph{determines} or \emph{recognizes}) two vertices $u,w$ if $d(u,v)\ne d(w,v)$, where $d(x,y)$ represents the length of a shortest $x-y$ path in $G$. A subset of vertices $S$ of $G$ is a \emph{metric generator} for $G$, if any pair of vertices of $G$ is distinguished by at least one vertex of $S$. A metric generator of minimum cardinality is called a \emph{metric basis} for $G$ and its cardinality is the \emph{metric dimension} of $G$, which is denoted by $dim(G)$. These concepts were introduced by Slater in \cite{leaves-trees} in connection with some location problems in graphs. On the other hand, the concept of metric dimension was independently introduced by Harary and Melter in \cite{harary}.

A standard metric generator, as defined above, uniquely recognizes all the vertices of a graph in order to look out how they do ``behave'' in the graph. However, this ``surveillance'' does not succeed if an anomalous situations occurs in some edge between two vertices instead of in a vertex. It is possible that a metric generators properly identifies the edges in order to also see their behaving, but in general this is not possible. In this sense, in the way of correctly recognize the edges of a graph, a new parameter was recently introduced in \cite{KeTrYe}. Another variant on such direction was also presented in \cite{mix-dim} where not only edges are recognized between them, but also there is a recognition scheme between any two elements (vertices or edges) of a graph. In this work we only center our attention into recognizing edges.

Given a connected graph $G=(V,E)$, a vertex $v\in V$ and an edge $e=uw\in E$, the distance between the vertex $v$ and the edge $e$ is defined as $d_G(e,v)=\min\{d_G(u,v),d_G(w,v)\}$. A vertex $w\in V$ \emph{distinguishes} two edges $e_1,e_2\in E$ if $d_G(w,e_1)\ne d_G(w,e_2)$. A nonempty set $S\subset V$ is an \emph{edge metric generator} for $G$ if any two edges of $G$ are distinguished by some vertex of $S$. An edge metric generator with the smallest possible cardinality is called an \emph{edge metric basis} for $G$, and its cardinality is the \emph{edge metric dimension}, which is denoted by $\edim(G)$. In \cite{KeTrYe}, the concepts above were defined only for the case of connected graphs. However, if we consider non-connected graphs, then the parameter could be easily adapted by considering the distance between two vertices belonging to two different components as infinite. Nevertheless, such adapting make not much sense, since then we can readily seen the following.

\begin{remark}
If $G$ is a non-connected graph with components $G_1,\dots, G_r$, then
$$\edim(G)=\sum_{i=1}^r \edim(G_i).$$
\end{remark}

In connection with this remark above, we can consider every component of a graph separately. Moreover, any necessary concept, terminology and notation required in the work will be introduced throughout the exposition, right before it is firstly used, and for any remaining basic graph theory terminology we follow the book \cite{west}.

Studies of graph products have been intensively made in the last few decades and by now, a rich theory involving the structure and recognition of classes of these
graphs has emerged, cf. \cite{ImKl}. The most interesting and studied graph products are the Cartesian product, the strong product, the direct product and the
lexicographic product which are also called \emph{standard products}. There are several other operations made with graphs (from which some of them are also called as product graphs in the literature) that have also attracted the attention of several researchers. Some of them are for example the corona product, the join graphs, the rooted product and the hierarchical product. There are different styles of studying the graph products (or graph operations). One of them involves the analysis of the properties of the structure itself and a second one standard approach to graph products is to deduce properties of a product with respect to (usually the same) properties of its factors. This latter situation is the center of our work, in connection with the edge metric dimension. Some primary studies on the edge metric dimension of Cartesian product graphs were presented in \cite{KeTrYe}, where the value of the edge metric dimension was computed for the grid graphs (Cartesian product of two paths), and for some cases of torus graphs (Cartesian product of cycles). Moreover, some other results on this topic can be found in \cite{zubri}, where the edge metric dimension of the join graph $G\vee K_1$\footnote{The graph $G\vee K_1$ is obtained from a graph $G$ and a vertex $v$, by joining with and edge every vertex of $G$ with the vertex $v$.}, and of the Cartesian product of a path with any graph $G$ was studied. To the best of our knowledge there are no more results concerning the edge metric dimension of product graphs. In contrast with this fact, other variants of the standard metric dimension have been deeply studied in the last recent years. Some examples are for instance \cite{Barragan,Estrada,Jannesari} to just name those ones concerning the lexicographic product of graphs, which is one of the studied product of our work. In this sense, it is now our goal to make several contribution to this topic of edge metric dimension, and we precisely begin with studying the lexicographic product, the join and the corona product graphs.



\section{The join of graphs}

Given two graphs $G$ and $H$, the join graph $G\vee H$ is obtained from $G$ and $H$ by adding an edge between any vertex of $G$ and any vertex of $H$. In this sense, it is clearly observed that the join graph $G\vee H$ is always a connected independently of the connectivity of the graphs $G$ and $H$. We next study the edge metric dimension of join graphs.

To this end, we need the following terminology and notation. A set of vertices $D$ of a graph $G$ is a \emph{total dominating set} of $G$ if every vertex of $G$ is adjacent to a vertex of $D$. The minimum cardinality of any total dominating set of $G$ is the \emph{total domination number} of $G$ and is denoted by $\gamma_t(G)$. A set of vertices of cardinality $\gamma_t(G)$ is called a $\gamma_t(G)$-\emph{set}.

A graph $G$ is in a class of graphs $\mathcal G$ if for any vertex $u\in V(G)$ there is an edge $uv$ incident with $u$ ($v$ is a neighbor of $u$), such that $\{u,v\}$ is a $\gamma_t(G)$-set. It is easy to see that complete and complete bipartite graphs on at least two vertices are in $\mathcal G$. Let us mention that if $\gamma_t(G)$ does not exists, then $G$ contains an isolated vertex and is not in class $\mathcal G$, in particular $K_1$ is such.

\begin{theorem}
\label{join} For any non trivial graphs $G$ and $H$,
\begin{equation*}
\edim(G\vee H)=\left\{
\begin{array}{ll}
|V(G)|+|V(H)|-1, & \mbox{if $G\in\mathcal G$ or $H\in\mathcal G$}, \\[0.2cm]
|V(G)|+|V(H)|-2, & \mbox{if $G,H\notin\mathcal G$}. \\ %
\end{array}%
\right.
\end{equation*}
\end{theorem}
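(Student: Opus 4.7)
The plan is to establish the theorem via matched upper and lower bounds, exploiting the small diameter of $G\vee H$. My first observation is that since the diameter of $G\vee H$ is at most $2$, for any vertex $w$ and edge $e$ we have $d_{G\vee H}(w,e)\in\{0,1,2\}$, and more specifically for a cross edge $gh$ (with $g\in V(G)$ and $h\in V(H)$) the distance from any $w\in V(G)\setminus\{g\}$ or $w\in V(H)\setminus\{h\}$ equals $1$; likewise the distance from any vertex of $V(G)$ to any edge of $E(H)$ is exactly $1$, and vice versa. The universal lower bound $|V(G)|+|V(H)|-2$ then follows by considering, for any fixed $g\in V(G)$ and any $h_1,h_2\in V(H)$, the pair of cross edges $gh_1$ and $gh_2$: only $h_1$ and $h_2$ themselves can distinguish them, so any edge metric generator $S$ must contain all but at most one vertex of $V(H)$; the symmetric statement holds for $V(G)$.

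To refine this bound when $G\in\mathcal{G}$ (the case $H\in\mathcal{G}$ being symmetric), I would assume toward a contradiction that $|S|=|V(G)|+|V(H)|-2$, so that necessarily $S=(V(G)\setminus\{g_0\})\cup(V(H)\setminus\{h_0\})$ for some $g_0,h_0$. Exploiting $G\in\mathcal{G}$, I pick a neighbor $g_1$ of $g_0$ such that $\{g_0,g_1\}$ is a $\gamma_t(G)$-set, and consider the edges $e_1=g_0g_1$ and $e_2=g_1h_0$. A direct distance computation shows that no vertex of $S$ distinguishes these: $g_1$ gives $0=0$; every $w\in V(G)\setminus\{g_0,g_1\}$ is adjacent in $G$ to $g_0$ or $g_1$ by total domination, so gives $1=1$; and every $w\in V(H)\setminus\{h_0\}$ trivially gives $1=1$. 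This contradiction yields $\edim(G\vee H)\geq|V(G)|+|V(H)|-1$.

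For the matching upper bounds, I would exhibit explicit edge metric generators. When $G\in\mathcal{G}$ or $H\in\mathcal{G}$, the set $S=V(G)\cup(V(H)\setminus\{h_0\})$ for any $h_0\in V(H)$ suffices. When $G,H\notin\mathcal{G}$, I would first pick $g_0\in V(G)$ and $h_0\in V(H)$ witnessing the failures of the $\mathcal{G}$-property, i.e., such that $\{g_0,v\}$ is not totally dominating in $G$ for any $v\in N_G(g_0)$ (and analogously for $h_0$ in $H$), and take $S=(V(G)\setminus\{g_0\})\cup(V(H)\setminus\{h_0\})$. In either scenario, one then verifies that $S$ distinguishes every pair of distinct edges by case analysis over the types of the two edges (both in $E(G)$, both in $E(H)$, both cross, or the three mixed combinations).

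The main obstacle lies in this verification. Most subcases are handled routinely because for any two distinct edges at least one shared or unshared endpoint lies in $S$ and carries distance $0$ to one edge while giving distance $1$ to the other. The genuinely delicate subcase mirrors the lower bound argument exactly: for pairs of the form $(g_0g,gh_0)$ with $g\in N_G(g_0)$, the only candidates in $S$ capable of distinguishing them are vertices $w\in V(G)$ that are non-adjacent in $G$ to both $g_0$ and $g$, and such $w$ exists precisely when $\{g_0,g\}$ is not a total dominating set---which is guaranteed by our choice of $g_0$ as a witness to $G\notin\mathcal{G}$. The symmetric critical pair $(h_0h,hg_0)$ for $h\in N_H(h_0)$ is resolved by the analogous property of $h_0$, and this completes the case analysis.
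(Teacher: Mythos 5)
Your proposal is correct and follows essentially the same route as the paper's proof: the same counting argument on cross edges $gh_1,gh_2$ for the universal lower bound, the same contradiction pair $g_0g_1$ and $g_1h_0$ using a $\gamma_t$-set $\{g_0,g_1\}$ when $G\in\mathcal G$, and the same explicit generator $V(G\vee H)\setminus\{g_0,h_0\}$ with the non-dominated vertex resolving the critical incident pair when $G,H\notin\mathcal G$. The only difference is cosmetic: for the case $G\in\mathcal G$ or $H\in\mathcal G$ the paper invokes the trivial bound $\edim(G\vee H)\le |V(G\vee H)|-1$ directly rather than re-verifying the all-but-one-vertex generator.
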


\begin{proof}
Let $G$ and $H$ be any graphs. The trivial upper bound for any graph is $\edim(G)\leq |V(G)|-1$ and therefore $\edim(G\vee H)\leq |V(G)|+|V(H)|-1$. Let $M$ be an edge metric basis for $G\vee H$. The distance between any edge $e=gh$ from $G\vee H$, where $g\in V(G), h\in V(H)$, to any vertex different from $g$ and $h$ equals one. Let $g$ be a fixed vertex from $G$. For $h\neq h'$, the edges $gh$ and $gh'$ have distance one to all vertices different from $g,h,h'$. Moreover, both edges have $g$ in common, so that the distance between them and $g$ equals zero in both cases. Therefore, at least one of $h$ and $h'$ must be in $M$. This argument can be repeated for any pair of edges of the form $gh$ and $gh'$ for any $h'\in V(H)-\{h\}$, and we see that at least $|V(H)|-1$ vertices from $H$ must be in $M$. Symmetrically we can see that at least $|V(G)|-1$ vertices from $G$ must be in $M$ and so, the lower bound $\edim(G\vee H)\geq |V(G)|+|V(H)|-2$ follows.

Assume now that one of the graphs, say $G$, belongs to $\mathcal G$. Suppose, with a purpose of contradiction, that $\edim(G\vee H)=|V(G)|+|V(H)|-2$ and let $S$ be an edge metric basis for $G\vee H$. In concordance with the above, it follows $|S\cap V(G)|=|V(G)|-1$ and $|S\cap V(H)|=|V(H)|-1$. Let $g\in V(G)$ and $h\in  V(H)$ be outside of $S$. Since $G\in \mathcal G$, $G$ has no isolated vertices, and there exists $g'\in V(G)$ which is adjacent to $g$ and $\{g,g'\}$ is a $\gamma_t(G)$-set. This implies that $gg'$ is at distance one to every vertex from $V(G)-\{g,g'\}$. Moreover, the edge $gg'$ is also at distance one to every vertex from $H$. As already mentioned the edge $g'h$ has distance one to every vertex from $V(G\vee H)-\{g',h\}$. Thus, the only vertices that distinguish the edges $gg'$ and $g'h$ are $g$ and $h$, which are not in $S$, and this is a contradiction with $S$ being an edge metric basis for $G\vee H$. Therefore, $\edim(G\vee H)>|V(G)|+|V(H)|-2$ and we have the equality $\edim(G\vee H)= |V(G)|+|V(H)|-1$.

To finish the proof let now $G,H\notin\mathcal G$. In this sense, there exists a vertex $g'\in V(G)$ such that $\{g,g'\}$ is not a $\gamma_t(G)$-set for every vertex $g$ adjacent to $g'$. In other words, there exists a vertex $x_g\in V(G)$ such that $d_G(gg',x_g)\geq 2$. Similarly, there exists a vertex $h'\in V(H)$ such that for every neighbor $h$ of $h'$ there exists a vertex $y_h\in V(H)$ such that $d_H(hh',y_h)\geq 2$. We will show that the set $S=V(G\vee H)-\{g',h'\}$ is an edge metric generator for $G\vee H$. Clearly, any two edges with both end-vertices in $S$ are distinguished by at least one vertex from $S$ (one of the end-vertices will do so). Similarly, edges $g'u$ and $g'v$, $u\neq v$, are also identified, because at least one of $u$ and $v$ is in $S$. The same happens with two edges $h'w$ and $h'z$, $w\neq z$. Thus, let $g'u$ and $h'v$ be two edges. If $u\neq v$, then without loss of generality, $u\in S$ distinguish $g'u$ and $h'v$. Hence, let $u=v$ and, by the symmetry of $G$ and $H$, we may assume that $u=g\in V(G)$. Because $G\notin\mathcal G$ for every edge, also for $gg'$, there exists a vertex $x_g\in V(G)$, at distance at least two to $gg'$ in $G$. Clearly, $x_g\in S$, $d_{G\vee H}(gh',x_g)=1$, $d_{G\vee H}(gg',x_g)=2$ and so, $x_g$ distinguishes the edges $gg'$ and $gh'$. Therefore, $S$ is an edge metric generator for $G\vee H$, and the equality $\edim(G\vee H)=|V(G)|+|V(H)|-2$ follows.
\end{proof}

It is easy to see that Theorem \ref{join} covers the results from \cite{KeTrYe} for a wheel $K_1\vee C_n$, for a fan $K_1\vee P_n$ and for a complete bipartite graph $K_{p,q}=N_p\vee N_q$. This latter result concerning bipartite graphs can be generalized for complete multipartite graphs as next shown.

\begin{corollary}\label{completebipartite}
For any complete multipartite graph $K_{r_1,\ldots,r_t}$,
\begin{equation*}
\edim(K_{r_1,\ldots,r_t})=\left\{
\begin{array}{ll}
r_1+r_2-2, & \mbox{if $t=2$,} \\
\sum_{i=1}^tr_i-1, & \mbox{if $t>2$.} \\ %
\end{array}%
\right.
\end{equation*}
\end{corollary}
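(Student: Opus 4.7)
The plan is to realize $K_{r_1,\ldots,r_t}$ as a join of two smaller graphs and apply Theorem \ref{join}. The natural decomposition is
\[
K_{r_1,\ldots,r_t} \;=\; N_{r_1}\vee K_{r_2,\ldots,r_t},
\]
where $N_k$ denotes the empty graph on $k$ vertices. I would first handle the case $t=2$: here the decomposition reads $K_{r_1,r_2}=N_{r_1}\vee N_{r_2}$. Since every vertex of $N_{r_i}$ is isolated, the total domination number does not exist, so neither factor is in the class $\mathcal G$. The second clause of Theorem \ref{join} then immediately gives $\edim(K_{r_1,r_2})=r_1+r_2-2$.

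For the case $t>2$, the main point is to verify that the iterated factor $H:=K_{r_2,\ldots,r_t}$ belongs to $\mathcal G$. I would first argue that $\gamma_t(H)=2$: picking one vertex from part $i$ and one from part $j\neq i$ dominates everything, since each of them is adjacent to all vertices of the other parts and to each other. Next, I would check the defining property of $\mathcal G$: given any vertex $u\in V(H)$ lying in some part, choose any neighbor $v$ of $u$ (which automatically lies in a different part); the pair $\{u,v\}$ is a total dominating set because vertices in $u$'s part are adjacent to $v$, vertices in $v$'s part are adjacent to $u$, and vertices in any other part are adjacent to both. Hence $H\in\mathcal G$, and the first clause of Theorem \ref{join} yields
\[
\edim(K_{r_1,\ldots,r_t})=|V(N_{r_1})|+|V(H)|-1=\sum_{i=1}^t r_i -1.
\]

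The main technical nuisance, rather than any deep obstacle, is the non-triviality hypothesis in Theorem \ref{join}: if $r_1=1$ then $N_{r_1}=K_1$ is trivial and the theorem does not apply to the split above. This is easy to circumvent by choosing the split more carefully: in the case $t>2$, as long as some $r_j\geq 2$, one may write $K_{r_1,\ldots,r_t}=N_{r_j}\vee K_{r_1,\ldots,\widehat{r_j},\ldots,r_t}$ and the argument above goes through verbatim. If all $r_i=1$ the graph reduces to $K_t$ and one verifies directly that $\edim(K_t)=t-1$. For $t=2$ with one of the parts of size $1$, the graph is a star $K_{1,n}$, which should be treated by a short direct argument: any $n-1$ of the leaves form an edge metric generator (the edge to the omitted leaf is the unique one at distance $1$ from all chosen leaves), and any $n-2$ leaves leave two edges with identical distance vectors, giving $\edim(K_{1,n})=n-1$ in agreement with the formula.
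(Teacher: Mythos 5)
Your proof is correct and follows essentially the same route as the paper: write the complete multipartite graph as a join and apply Theorem \ref{join}, using that $N_{r_1},N_{r_2}\notin\mathcal G$ when $t=2$ and that $K_{r_2,\ldots,r_t}\in\mathcal G$ when $t>2$. You additionally spell out why $K_{r_2,\ldots,r_t}\in\mathcal G$ and patch the degenerate case $r_1=1$, where one factor of the split is the trivial graph $K_1$ and Theorem \ref{join} does not literally apply --- a point the paper's proof passes over in silence, so your extra care is a genuine (if minor) improvement.
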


\begin{proof}
For $t=2$ we have $\edim(K_{r_1,r_2})=\edim(N_{r_1}\vee N_{r_2})=r_1+r_2-2$ by Theorem \ref{join} because both $N_{r_1},N_{r_2}\notin\mathcal G$. For $t>2$ we can consider $K_{r_1,\ldots,r_t}$ as the join of $N_{r_1}\vee K_{r_2,\ldots,r_t}$. Since $K_{r_2,\ldots,r_t}\in\mathcal G$, by Theorem \ref{join}, we have that $\edim(K_{r_1,\ldots,r_t})=\sum_{i=1}^tr_i-1$.
\end{proof}


\section{The lexicographic product}

For two graphs $G$ and $H$ the vertex set of the \emph{lexicographic product graph} $G[H]$ is $V(G)\times V(H)$. Two vertices $(g,h)$ and $(g',h')
$ are adjacent in the lexicographic product $G[H]$ if either $gg'\in
E(G)$ or ($g=g'$ and $hh'\in E(H)$). For $h\in V(H)$, we call the set $G^{h}=\{(g,h)\in V(G[H]):g\in V(G)\}$, a $G$ \emph{layer} throughout $h$ in
$G[H]$, and $^{g}H=\{(g,h)\in V(G[H]):h\in V(H)\}$ is an $H$ \emph{layer} throughout $g$ in $G[H]$. Note that the subgraph of $G[H]$ induced by $G^{h}$ is isomorphic to $G$, as well as the subgraph of $G[H]$ induced by $^{g}H$ is isomorphic to $H$. Note also that the lexicographic product is associative but not commutative, cf.~\cite{ImKl}. The map $p_{G}:V(G[H])\rightarrow V(G)$ defined with $p_{G}((g,h))=g$ is called a \emph{projection map onto} $G$. Similarly, we can define the \emph{projection map onto} $H$. We must remark also that the lexicographic product graph $G[H]$ is a connected if and only if $G$ is connected. It is well known that the distance between any two vertices $(g,h),(g',h')\in V(G[H])$ is given by
\begin{equation}\label{dist-lexico}
  d_{G\circ H}((g,h),(g',h'))=\left\{\begin{array}{ll}
                                       \min\{2,d_H(h,h')\}, & \mbox{if $g=g'$}, \\
                                       d_G(g,g'), & \mbox{if $g\ne g'$}.
                                     \end{array}
  \right.
\end{equation}

Before we state the general result for the edge metric dimension of lexicographic product graphs we need to recall some well known concepts. Let $G$ be a graph. The vertex $u\in V(G)$ is called a \textit{false twin} of $v\in V(G)$ if $N_G(u)=N_G(v)$. Similarly, the vertices $x\in V(G)$ and $y\in V(G)$ are called \emph{true twins} if $N_G[x]=N_G[y]$. Clearly, each vertex is its own true twin and also its own false twin. Otherwise, different false twins $u$ and $v$ are not adjacent and if $x$ and $y$ are different true twins, then $x$ and $y$ are adjacent. It is easy to see that different false twins $u$ and $v$ are true twins only to themselves and vice versa, different true twins $x$ and $y$ are false twins only to themselves.

Both relations, being true or being false twins, are clearly equivalence relations. We are interested in all nontrivial equivalence classes of both relations, that is equivalence classes with at least two elements. Let $F=\{F_1,\ldots,F_k\}$ and $T=\{T_1,\ldots,T_{\ell}\}$ be the sets of all nontrivial equivalence classes of the false twin and true twin equivalence relations, respectively. Further, we need the number of elements in these classes, and therefore, we use notation $f(G)=\sum_{i=1}^k|F_i|$, $f'(G)=f(G)-k$, $t(G)=\sum_{i=1}^{\ell}|T_i|$ and $t'(G)=t(G)-\ell$.

The edges $e=uv$ and $e'=xy$ of a graph $G$ are \emph{twin edges} if $N_G[u]\cup N_G[v]=N_G[x]\cup N_G[y]$. If the twin edges $e=uv$ and $e'=xy$ are not incident with each other, then one can observe that vertices $u,v,x,y$ induce a subgraph that contains a four-cycle. Otherwise, if they are incident, say that $v=y$, then the condition $N_G[v]\subseteq N_G[u]\cup N_G[x]$ plays an important role. In particular, if there exists a neighbor $v$ of the false twins $u$ and $x$, such that $N_G[v]\subseteq N_G[u]\cup N_G[x]$, then the edges $uv$ and $xv$ are twin edges. Similarly, if $N_G[v]\subseteq N_G[u]\cup N_G[x]$ hold for a neighbor $v$ of the true twins $u$ and $x$, then again the edges $uv$ and $xv$ represent two twin edges.

As false twins, true twins, and twin edges play an important role while studying the edge metric dimension of lexicographic product graphs, we need to be careful to not count twice some of the vertices involved in the process. Let $G'$ be a graph obtained from a graph $G$ where we delete all vertices but one in every equivalence class of $F$ and of $T$. We call this operation \emph{twin deletion}. Let $Q=\{Q_1,\ldots,Q_m\}$ be nontrivial equivalence classes of the edge-twin relation of $G'$. We denote by $q(G')=\sum_{i=1}^m|Q_i|$ and $q'(G')=q(G')-m$ the number of nontrivial equivalence classes of the edge-twin relation.

It is easy to see that $t(K_n)=n, t'(K_n)=n-1$ and $f(K_n)=f'(K_n)=q(K'_n)=q'(K'_n)=0$. Similarly, for $m,n\geq 2$ we have that $f(K_{m,n})=m+n, f'(K_{m,n})=m+n-2$ and $t(K_{m,n})=t'(K_{m,n})=q(K'_{m,n})=q'(K'_{m,n})=0$. On the other hand, a graph $G$ obtained from a six-cycle $u_1u_2u_3u_4u_5u_6$ with two additional edges $u_2u_6$ and $u_3u_5$ has neither true nor false twins and we have $G'=G$. It is easy to see that $u_2u_3$ and $u_5u_6$ are twin edges and we have $q(G')=2$ and $q'(G')=1$. Another example is a graph $H$ that is obtained from vertices in $Q\cup N \cup Q'$, where the vertices of $Q$ and $Q'$ induce cliques while the set $N$ is an independent set of vertices and the cardinality of $Q,Q'$ and $N$ is at least two. In addition, we add all possible edges between $Q\cup Q'$ and $N$. The sets $Q$ and $Q'$ form two nontrivial equivalence classes of the true twin relation in $H$ and $N$ is the only nontrivial equivalence class of the false twin relation. Clearly, $H'$ is a path on three vertices $QNQ'$ and edges $QN$ and $NQ'$ are twins in $H'$.

We start this part of our exposition with a technical lemma that shows the independence of the values $q(G')$ and $q'(G')$ with respect to the deletion of true or false twins.

\begin{lemma}\label{technical}
Let $G$ be a graph. If the graphs $G'$ and $G''$ are obtained from $G$ by a twin deletion, then $q(G')=q(G'')$ and  $q'(G')=q'(G'')$.
\end{lemma}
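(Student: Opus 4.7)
The plan is to show that any two graphs $G'$ and $G''$ obtained from $G$ by twin deletion are in fact isomorphic via an automorphism of $G$; since both $q$ and $q'$ are isomorphism invariants, the claimed equalities follow immediately.

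The first step I would carry out is the key structural observation that swapping two twins is an automorphism of $G$. Namely, if $u,v$ are in the same nontrivial true-twin or false-twin equivalence class, then the transposition $\sigma_{u,v}$ that swaps $u$ with $v$ and fixes every other vertex is an automorphism of $G$: for true twins the identity $N_G[u]=N_G[v]$ (together with $uv\in E(G)$) ensures all incidences are preserved, and for false twins $N_G(u)=N_G(v)$ with $uv\notin E(G)$ yields the same conclusion.

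Next, for each nontrivial equivalence class $C$ of the true-twin or false-twin relation in $G$, let $r_C\in V(G')$ and $r'_C\in V(G'')$ be the representatives chosen by the two twin deletions. I would then take $\phi$ to be the composition of all the transpositions $\sigma_{r_C,r'_C}$. The paragraph preceding the lemma records that a vertex cannot belong to a nontrivial true-twin class and simultaneously to a nontrivial false-twin class, so all such classes are pairwise disjoint; in particular, the supports of the $\sigma_{r_C,r'_C}$ are disjoint, they commute, and $\phi$ is a well-defined automorphism of $G$. By construction $\phi$ sends $V(G')$ into $V(G'')$, and since $|V(G')|=|V(G'')|$ and $\phi$ is bijective, $\phi$ maps $V(G')$ onto $V(G'')$; hence $\phi$ restricts to an isomorphism of induced subgraphs $G'\to G''$.

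Finally, because isomorphisms preserve closed and open neighborhoods, they carry the edge-twin relation to itself, so they send nontrivial edge-twin classes of $G'$ bijectively to nontrivial edge-twin classes of $G''$ of matching sizes. This immediately yields $q(G')=q(G'')$ and $q'(G')=q'(G'')$. I expect no serious obstacle; the main point to watch is the disjointness of the twin equivalence classes, which is needed so that the composition of transpositions is well-defined as an automorphism and not just a set-theoretic bijection.
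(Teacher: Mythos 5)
Your proposal is correct, but it takes a genuinely different route from the paper. The paper argues directly at the level of twin edges: given twin edges $uv$ and $xy$ of $G$, it tracks case by case (no vertex a twin, exactly one vertex a twin, several vertices twins) how each deleted end-vertex can be replaced by the surviving representative of its class in $G'$ and in $G''$, concluding that the count of nontrivial edge-twin classes is unaffected by which representatives survive. You instead prove the stronger statement that $G'$ and $G''$ are isomorphic as induced subgraphs, by composing the twin-swapping transpositions $\sigma_{r_C,r'_C}$ into an automorphism $\phi$ of $G$ that carries $V(G')$ onto $V(G'')$, and then invoke the fact that $q$ and $q'$ are isomorphism invariants. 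Your argument is conceptually cleaner and arguably more rigorous: it replaces the paper's ``similarly, if more than one vertex is a twin, we can always find representatives'' step with a single uniform construction, and it yields more than the lemma asks for (an explicit isomorphism $G'\cong G''$). The one point that needs care --- that the nontrivial true-twin and false-twin classes are pairwise disjoint, so the transpositions have disjoint supports and commute --- is exactly the point you flag, and it is justified by the paper's earlier observation that distinct false twins are true twins only to themselves and vice versa. The paper's approach, by contrast, is more elementary and self-contained, needing no discussion of automorphisms, but it establishes only the equality of the counts rather than the underlying isomorphism.
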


\begin{proof}
Let $uv$ and $xy$ be twin edges of a graph $G$. If none of the vertices $u,v,x,y$ is a true or a false twin, then $uv$ and $xy$ remain twin edges in $G'$ and in $G''$. If exactly one of $u,v,x,y$, say $u$, is a true or a false twin in $G$, then there exist $u'$ and $u''$ from the same equivalence class as $u$ which remain in $G'$ and in $G''$, respectively, after twin deletion process. Clearly, $u'v$ and $u''v$ are twin edges with $xy$ in $G'$ and $G''$, respectively. Similarly, if more than one vertex from $u,v,x,y$ are true or false twins, then we can always find their representatives in $G'$ and in $G''$ with the same properties as $u,v,x,y$. These representatives have the same properties in $G'$ and in $G''$ and the number of twin edges remains the same in both $G'$ and in $G''$. Therefore, $q(G')=q(G'')$ and $q'(G')=q'(G'')$ follows immediately.
\end{proof}


\begin{theorem}\label{lex1}
Let $G$ be any graph with at least three vertices in every component and let $H\ncong K_1$ be a graph. Then
$$\edim(G[H])\ge |V(G)|(|V(H)|-1)+f'(G)+t'(G)+q'(G').$$
Moreover, if $H\notin\mathcal G$, then
$$\edim(G[H])= |V(G)|(|V(H)|-1)+f'(G)+t'(G)+q'(G').$$
\end{theorem}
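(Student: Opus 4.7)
The plan is to split the theorem into a lower bound (requiring only $H\ncong K_1$) and a matching upper bound (requiring $H\notin\mathcal{G}$). In both directions I rely on the distance formula~(\ref{dist-lexico}): distances in $G[H]$ between vertices in different $H$-layers reduce to $G$-distances, while distances within a layer are at most $2$. I will argue about the missing set $M=V(G[H])\setminus S$ for a putative edge metric basis $S$ and show $|M|\le |V(G)|-f'(G)-t'(G)-q'(G')$ via four independent reductions.

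The lower bound proceeds in four successive steps. Step one: for any $g\in V(G)$, fix a neighbor $g'\in V(G)$ (available since every component of $G$ has at least three vertices, so every vertex has degree at least one), and compare for $h_1\ne h_2$ the vertical edges $(g,h_1)(g',h^\ast)$ and $(g,h_2)(g',h^\ast)$; every vertex distinct from $(g,h_1)$ and $(g,h_2)$ is at the same distance from both, forcing at most one vertex of each $H$-layer into $M$. Step two: for distinct $u_1,u_2$ in a nontrivial false twin class $F_i$, pick a common neighbor $v\in N_G(u_1)=N_G(u_2)$ and compare $(u_1,h_1^\ast)(v,h^\ast)$ with $(u_2,h_2^\ast)(v,h^\ast)$; every vertex other than $(u_1,h_1^\ast)$ and $(u_2,h_2^\ast)$ is equidistant from both edges (using $d_G(g,u_1)=d_G(g,u_2)$ for external $g$, plus a short in-layer computation), so at most one layer of $F_i$ contains a missing vertex, saving $\sum(|F_i|-1)=f'(G)$ in total. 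Step three: the analogous argument for distinct true twins $x_1,x_2$ in a class $T_j$ uses a common neighbor $v\notin\{x_1,x_2\}$ (existing because $|N_G(x_1)|\ge 2$ by the three-vertex-component hypothesis), saving an additional $t'(G)$. Step four: for each nontrivial edge twin class $Q_\ell$ of $G'$, I use pairs of vertical edges of $G[H]$ projecting to twin edges of $G'$; the identity $N_{G'}[u]\cup N_{G'}[v]=N_{G'}[x]\cup N_{G'}[y]$ makes every external vertex in $G'$ equidistant from both, and case analysis on incident versus non-incident twin edges confines the distinguishers to the layers of the endpoints, forcing $|Q_\ell|-1$ further reductions per class and saving $q'(G')$ overall. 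Lemma~\ref{technical} guarantees that $q'(G')$ is independent of the twin-deletion chosen, so the four savings combine additively.

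For the upper bound, assume $H\notin\mathcal{G}$. The hypothesis furnishes $h'\in V(H)$ such that every neighbor $h$ of $h'$ admits a witness $y_h\in V(H)$ with $d_H(hh',y_h)\ge 2$. Viewing $V(G')\subseteq V(G)$ (one representative per nontrivial twin class together with all non-twin vertices, so $|V(G')|=|V(G)|-f'(G)-t'(G)$), I choose $V^\ast\subseteq V(G')$ of cardinality $|V(G')|-q'(G')$ by removing $|Q_\ell|-1$ carefully selected endpoints of edges in $Q_\ell$ from $V(G')$ for each nontrivial edge twin class $Q_\ell$ of $G'$. Set $M=\{(g,h'):g\in V^\ast\}$ and $S=V(G[H])\setminus M$, so $|S|=|V(G)|(|V(H)|-1)+f'(G)+t'(G)+q'(G')$. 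To verify that $S$ is an edge metric generator I case-split pairs of edges of $G[H]$: horizontal pairs within a single layer are separated by the endpoints in $S$ (since at most one vertex per layer is missing), horizontal pairs in distinct layers are separated by a $G$-layer vertex outside both layers, vertical pairs with a common $G$-edge are handled by their differing endpoints in $S$, and vertical pairs with different $G$-edges are separated by a $G$-distance difference combined with the witnesses $y_h$ for the otherwise indistinguishable configurations. The construction of $V^\ast$ is tuned so that every edge twin configuration retains a surviving endpoint in $S$.

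The main obstacle will be step four of the lower bound together with its matching placement of $V^\ast$ in the upper bound. Twin edges in $G'$ come in incident and non-incident flavours, each with its own distance profile and distinguishing structure, and the interplay with the already-collapsed true and false twin structure of $G$ demands careful bookkeeping to avoid double-counting savings between $f'(G)+t'(G)$ and $q'(G')$. I expect the cleanest route is to establish the lower and upper bounds in parallel, so that every pair of edges of $G[H]$ whose only distinguishers are their endpoints is accounted for exactly once, either by the vertex-twin contribution in $G$ or by the edge-twin contribution in $G'$.
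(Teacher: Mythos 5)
Your proposal follows essentially the same route as the paper's proof: the same four-stage lower-bound argument (one missing vertex per $H$-layer from comparing edges with a common endpoint, then one fully-present layer per false-twin class, true-twin class, and edge-twin class of $G'$, with Lemma \ref{technical} ensuring $q'(G')$ is well defined), and the same upper-bound construction that deletes exactly the set $\{(g,h'):g\in V^\ast\}$ and uses the witness vertices coming from $H\notin\mathcal G$ to separate the remaining hard pairs. The only place your sketch is thinner than the paper is the final case analysis: the decisive configuration is an edge inside a layer $^{u}\!H$ against an incident edge leaving that layer through the same endpoint, which is precisely where the witness $x_{h''}$ (your $y_h$) is needed, but since you invoke exactly that tool the argument is the same.
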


\begin{proof}
Let $G$ be a graph where every component contains at least three vertices and let $H$ be any graph on at least two vertices. Let $S$ be an edge metric basis for $G[H]$.
For an edge $gg'\in E(G)$ and vertices $h,h'\in V(H)$, $h\neq h'$, by $(\ref{dist-lexico})$, we can see that the edges $(g,h)(g',h)$ and $(g,h)(g',h')$ have the same distance to every vertex $(g_0,h_0)$ different from $(g',h)$ and $(g',h')$. Therefore, at least one vertex from $(g',h)$ and $(g',h')$ must be in $S$. Because $h,h'$ and $g'$ are arbitrarily taken, and $G$ contains no isolated vertices, we see that $S$ contains at least $|V(H)|-1$ vertices in each layer $^{g'}\!H$, for every $g'\in V(G)$.

Let now $g$ and $g'$ be different true or false twins of $G$ and let $g_0$ be any neighbor of $g$ and $g'$ different from $g$ and $g'$. Notice that $g_0$ exists because every component of $G$ contains at least three vertices. If both layers $^g\!H$ and $^{g'}\!H$ have exactly $|V(H)|-1$ vertices in $S$, then let $(g,h)$ and $(g',h')$ be such vertices outside of $S$. In such a case, the edges $(g,h)(g_0,h)$ and $(g',h')(g_0,h)$ have different distances only to the vertices $(g,h)$ and $(g',h')$, because of $(\ref{dist-lexico})$, and since $g$ and $g'$ are true or false twins. This is a contradiction with $S$ being an edge metric basis. Therefore at least one layer from $^g\!H$ and $^{g'}\!H$ must be entirely contained in $S$ for any pair of true or false twins. So, if $T_1,\ldots,T_{\ell}$ are nontrivial equivalence classes of the true twin relation, then at most one layer $^g\!H$, $g\in T_i$, is not entirely contained in $S$ for every $i\in\{1,\ldots,\ell\}$. Similarly, if $F_1,\ldots,F_k$ are the nontrivial equivalence classes of the false twin relation, then at most one layer $^g\!H$, $g\in F_i$, is not entirely contained in $S$ for every $i\in\{1,\ldots,k\}$. With this comments we arrive to the lower bound $\edim(G[H])\geq |V(G)|(|V(H)|-1)+f'(G)+t'(G)$.

We will now increase this bound in the case when $q'(G')>0$ ($G'$ is obtained from $G$ by the twin deletion previously described). We consider an edge metric generator $S$ of cardinality $|V(G)|(|V(H)|-1)+f'(G)+t'(G)$ as described in the previous paragraph. By Lemma \ref{technical}, for any nontrivial equivalence class of true or false twins, we can choose any vertex $g$ as the representative vertex for which $^g\!H$ is not entirely contained in $S$. Suppose now that $uv$ and $xy$ are twin edges of $G'$. By our choice of $G'$, there exist vertices $(u,h_1),(v,h_2),(x,h_3)$ and $(y,h_4)$ from $G[H]$ which are not in $S$. Note that the condition $N_{G'}[u]\cup N_{G'}[v]=N_{G'}[x]\cup N_{G'}[y]$ implies that $N_G[u]\cup N_G[v]=N_G[x]\cup N_G[y]$ holds as well because any true or false twin $z$ of $G$ is in $N_G[w]$ if and only if the representative of $z$ in $G'$ is in $N_{G'}[w]$. But then no vertex from $S$ distinguishes the edges $(u,h_1)(v,h_2)$ and $(x,h_3)(y,h_4)$, which is a contradiction. Therefore, at least one layer from $^u\!H,^v\!H,^x\!H$ and $^{y}\!H$ must be entirely contained in $S$ for any pair of twins $uv$ and $xy$ from $G'$. Moreover, if $uv$ and $xy$ are incident, say that $v=y$, then at least one layer from $^u\!H$ and $^x\!H$ must be entirely contained in $S$. In consequence, if $Q_1,\ldots,Q_m$ are nontrivial equivalence classes of the edge-twin relation of $G'$, then for at most one edge $wz\in Q_i$ both $^w\!H$ and $^z\!H$ are not entirely contained in $S$ for every $i\in\{1,\ldots,m\}$. From this facts, the lower bound $\edim(G[H])\geq |V(G)|(|V(H)|-1)+f'(G)+t'(G)+q'(G')$ follows.

We next show that this lower bound is exact when $H\notin\mathcal G$. So, suppose that $H\notin\mathcal G$ and let $h'$ be a vertex from $H$ such that for every edge $hh'$ there exists a vertex $x_h\in V(H)$ where $x_h$ is neither adjacent to $h$ nor to $h'$. Notice that $h'$ can also be an isolated vertex of $H$, but then we can take for $h$ any vertex different from $h'$ which exists by the assumption. For every nontrivial equivalence class $T_i$ of the true twin relation, we fix one vertex $t_i\in T_i$, $i\in\{1,\ldots,\ell\}$, and for every nontrivial equivalence class $F_j$ of the false twin relation, we also fix one vertex $f_i\in F_i$, $i\in\{1,\ldots,k\}$. Finally, let $Q_1,\ldots,Q_m$ be the nontrivial equivalence classes of the edge-twin relation of $G'$. We fix one edge $w_iz_i\in Q_i$ for every $i\in\{1,\ldots,m\}$ and for every other edge $u^j_iv^j_i\in Q_i$, $j\in\{1,\ldots,|Q_i|-1\}$, we fix a vertex which must be different from $w_i$ and from $z_i$. If the notation is chosen so that $u_i^j$ is this vertex, then we denote by $Q'_i$ the set $\{u_i^1,\ldots,u_i^{|Q_i|-1}\}$. We define the new sets $\mathcal T=\bigcup_{i=1}^{\ell}(T_i-\{t_i\})$, $\mathcal F=\bigcup_{i=1}^k(F_i-\{f_i\})$ and $\mathcal Q=\bigcup_{i=1}^m Q'_i$. We will show that the set
$$S=\left(V(G)\times (V(H)-\{h'\})\right)\cup\left((\mathcal T\cup\mathcal F\cup\mathcal Q)\times\{h'\}\right)$$
is an edge metric generator of cardinality $|V(G)|(|V(H)|-1)+f'(G)+t'(G)+q'(G')$. The equality $|S|=|V(G)|(|V(H)|-1)+f'(G)+t'(G)+q'(G')$  follows directly from the definitions of $\mathcal T, \mathcal F$ and $\mathcal Q$. To observe that $S$ is an edge metric generator, we need to check only pairs of edges $e_1$ and $e_2$ that have both end-vertices outside of $S$, or if $e_1$ and $e_2$ are incident, then the common end-vertex can be in $S$. If $e_1$ and $e_2$ (incident or not) have both end-vertices outside of $S$, then they must be lying over the layer $^{h'}\!H$. Let $e_1=(u,h')(v,h')$ and $e_2=(x,h')(y,h')$. By the definition of $S$ we have that $N_G[u]\cup N_G[v]\neq N_G[x]\cup N_G[y]$. Suppose without loss of generality that there exists $g\in (N_G[u]\cup N_G[v])-(N_G[x]\cup N_G[y])$. Clearly, $(g,h)\in S$ distinguishes $e_1$ and $e_2$ because  $d_{G[H]}((g,h),e_1)=1$ and $d_{G[H]}((g,h),e_2)>1$ (recall that $h$ is different from $h'$ as they are adjacent or $h'$ is an isolated vertex). It remains that $e_1$ and $e_2$ are incident and that the common vertex is from $S$. Let now $e_1=(u,h')(v,h'')$ and $e_2=(x,h')(v,h'')$. If $u\neq v$ and $x\neq v$, then we can use the same argument that $N_G[u]\cup N_G[v]\neq N_G[x]\cup N_G[y]$ and we conclude as before. Thus we may assume that either $u=v$ or $x=v$ and in this case $h''$ must be adjacent to $h'$. By the symmetry we can assume that $u=v$. Recall that, since $H\notin\mathcal G$, there exists a vertex $x_{h''}$ that is nonadjacent to $h'$ and nonadjacent to $h''$. Clearly, $(u,x_{h''})\in S$ distinguishes $e_1$ and $e_2$ because $d_{G[H]}((u,x_{h''}),e_1)=2$ and $d_{G[H]}((u,x_{h''}),e_2)=1$. Because every pair of edges from $G[H]$ is distinguished by a vertex from $S$, we obtain that $\edim(G[H])\leq |V(G)|(|V(H)|-1)+f'(G)+t'(G)+q'(G')$ when $H\notin\mathcal G$ and the equality follows for this case.
\end{proof}

One could think that the bound given above is indeed an equality for any graphs $G$ and $H$ satisfying the statements of the theorem. However, this is not true, since other extra situations are also influencing the value of $\edim(G[H])$. We next comment some facts on this regard. To this end, we need the following terminology.

A vertex $v\in V(G)$ is called a \emph{satellite} of a vertex $u\in V(G)$ if $N_G[v]\subsetneq N_G[u]$. We also say that in such a case $u$ has a satellite $v$. If $v$ is a satellite of $u$, then $v$ and $u$ are adjacent. If $u$ has a false twin $v$, $u\neq v$, then $u$ cannot have a satellite, as any vertex $w$ adjacent to $u$ has $v$ in its closed neighborhood. On the other hand $u$, can be a satellite if it has a false twin $v$. If $x$ has a different true twin $y$, then $x$ can be a satellite of some vertex and can also have satellites. Similar as true twins, false twins and twin edges, vertices that have satellites are important for the edge metric dimension of the lexicographic product $G[H]$ when $H\in \mathcal G$.

\begin{lemma}\label{satellite}
Let $G$ and $H$ be any graphs, where every component of $G$ contains at least three vertices and $H\in\mathcal G$, and let $S$ be an edge metric basis for $G[H]$. If $g\in V(G)$ has a satellite $g'\in V(G)$, then $^{g}\!H$ or $^{g'}\!H$ is entirely contained in $S$.
\end{lemma}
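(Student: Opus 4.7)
The plan is to argue by contradiction: assume $S$ is an edge metric basis for $G[H]$ but neither $^g\!H$ nor $^{g'}\!H$ is entirely contained in $S$. The opening paragraph of the proof of Theorem~\ref{lex1} already shows that every layer $^u\!H$ must contribute at least $|V(H)|-1$ vertices to $S$, so under this assumption each of $^g\!H$ and $^{g'}\!H$ misses exactly one vertex of $S$; call these missing vertices $(g,h_1)$ and $(g',h_2)$.

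Since $H\in\mathcal G$, there is a neighbor $\bar h_1$ of $h_1$ in $H$ such that $\{h_1,\bar h_1\}$ is a total dominating set of $H$; in particular every vertex of $H$ is adjacent to $h_1$ or to $\bar h_1$. The proposed pair of indistinguishable edges is
\[ e_1=(g,h_1)(g,\bar h_1) \qquad\text{and}\qquad e_2=(g',h_2)(g,\bar h_1), \]
which share the endpoint $(g,\bar h_1)$ and are well-defined edges of $G[H]$ because $h_1\bar h_1\in E(H)$ and $gg'\in E(G)$. The aim is then to show that the only vertices of $V(G[H])$ distinguishing $e_1$ from $e_2$ are $(g,h_1)$ and $(g',h_2)$, contradicting the assumption that $S$ is an edge metric generator.

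The verification is a short case analysis on the first coordinate $v$ of an arbitrary vertex $(v,k)$, using formula~\eqref{dist-lexico}. When $v\notin\{g,g'\}$, the distance from $(v,k)$ to $e_1$ is $d_G(v,g)$ and to $e_2$ is $\min(d_G(v,g),d_G(v,g'))$; the satellite hypothesis $N_G[g']\subsetneq N_G[g]$ supplies the key inequality $d_G(v,g)\le d_G(v,g')$, because the penultimate vertex of any shortest $v$--$g'$ path is a neighbor of $g'$ and therefore lies in $N_G[g]$. Hence both distances coincide. When $v=g$, the total-domination property of $\{h_1,\bar h_1\}$ forces both distances to equal $1$ for $k\neq h_1,\bar h_1$ and both to equal $0$ for $k=\bar h_1$; the value $k=h_1$ is excluded because $(g,h_1)\notin S$. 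When $v=g'$, the distance from $(g',k)$ to $e_1$ is always $1$, while to $e_2$ it drops to $0$ only when $k=h_2$, and again $(g',h_2)\notin S$.

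The main obstacle, in my view, lies in pinning down the correct pair of edges: $e_1$ has to sit entirely inside $^g\!H$ so that its distance pattern from outside the two layers is governed solely by $g$, while $e_2$ must cross from $^{g'}\!H$ into $^g\!H$ through a carefully chosen common anchor $(g,\bar h_1)$ so that the $\mathcal G$-property of $H$ controls the remaining inside-layer cases. The satellite hypothesis on $G$ is precisely what yields the inequality $d_G(v,g)\le d_G(v,g')$ needed to collapse the outside-layer geometry, and the $\mathcal G$-property of $H$ is precisely what confines all inside-layer discrepancies to the two forbidden vertices $(g,h_1)$ and $(g',h_2)$.
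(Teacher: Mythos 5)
Your proof is correct and takes essentially the same approach as the paper: it uses the identical pair of edges $e_1=(g,h_1)(g,\bar h_1)$ and $e_2=(g',h_2)(g,\bar h_1)$, with the total dominating pair $\{h_1,\bar h_1\}$ controlling the $^{g}\!H$-layer and the satellite inclusion $N_G[g']\subsetneq N_G[g]$ giving $d_G(v,g)\le d_G(v,g')$ for all remaining vertices. The only difference is cosmetic: you organize the verification uniformly by the first coordinate $v\in\{g\},\{g'\}$, or neither, whereas the paper separates the layers over $N_G[g']$ as a middle case.
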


\begin{proof}
By the proof of Theorem \ref{lex1}, there are at least $|V(H)|-1$ vertices of $^{g}\!H$ and of $^{g'}\!H$ in $S$. Suppose that, on the way to a contradiction, $(g,h)$ and $(g',h')$ do not belong to $S$. Because $H\in\mathcal G$, there exists a vertex $h_0\in V(H)$ that is adjacent to $h$ and $\{h,h_0\}$ is a $\gamma_t(H)$-set. We will see that edges $e_1=(g,h)(g,h_0)$ and $e_2=(g',h')(g,h_0)$ are not distinguished by any vertex form $S$. First, every vertex from $^{g}\!H$, with the exception of $(g,h)$ and $(g,h_0)$, is at distance 1 to both $e_1$ and $e_2$, because $\{h,h_0\}$ is a $\gamma_t(H)$-set and $gg'\in E(G)$. Second, the vertices from $^{g_0}\!H$ for every $g_0\in N_G[g']$, with the exception of $(g',h')$, are at distance 1 to $e_1$ and to $e_2$, because $g'$ is a satellite of $g$. Third, and finally, the other vertices from $G[H]$ are at the same distance to vertices of $^{g}\!H$, and so, also to $e_1$ and to $e_2$. Thus, the only vertices that distinguish $e_1$ and $e_2$ are $(g,h)$ and $(g',h')$ which are not in $S$, and this is a contradiction. Therefore, $^{g}\!H$ or $^{g'}\!H$ is entirely contained in $S$.
\end{proof}

This last lemma is one of the reasons causing that the bound from Theorem \ref{lex1} does not in general hold as equality, when $H\in\mathcal G$. We can expect that, if there are some satellite vertices, then one needs to add some additional vertices to a given set to become an edge metric generator. Again we observe that we need to be careful not to count twice some of them. For example, observe $K_p\vee N_r$. Every vertex from $N_r$ is a satellite from every vertex from $K_p$ and one can expect that $\min\{p,r\}$ of vertices need to be added to a given set to get an edge metric generator for some $H\in\mathcal G$, which yields a kind of minimization problem. However, this is not the right approach, as we have already seen in the second paragraph of the proof of Theorem \ref{lex1}, where all $H$-layers initiated by true twins (with one possible exception in every equivalence class), and by false twins (with one possible exception in every equivalence class) must belong to a given edge metric basis.


\section{The corona product}

Let $G$ and $H$ be two graphs of order $n_1$ and $n_2$, respectively. The corona product graph $G\odot H$ is defined as the graph obtained from $G$ and $H$, by taking one copy of $G$ and $n_1$ copies of $H$ and joining by an edge every vertex from the $i^{th}$-copy of $H$ with the $i^{th}$-vertex of $G$. Given a vertex $g\in V(G)$, the copy of $H$ whose vertices are adjacent to $g$ is denoted by $H_g$. We will first analyze the situation in which the second factor of this product is not isomorphic to the singleton graph $K_1$.

\begin{theorem}
\label{corona} For any graphs $G$ and $H$ where $G$ is connected and $|V(H)|\geq 2$,
$$\edim(G\odot H)=|V(G)|\cdot(|V(H)|-1)$$
\end{theorem}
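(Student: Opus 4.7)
My plan is to establish matching lower and upper bounds of $|V(G)|(|V(H)|-1)$ on $\edim(G\odot H)$. For the lower bound I fix any $g\in V(G)$ and any edge metric generator $S$, and consider pairs of edges $e_1=gh$, $e_2=gh'$ with distinct $h,h'\in V(H_g)$. Using the corona distance formulas (namely, $d_{G\odot H}(x,h'')=d_G(x,g)+1$ whenever $h''\in V(H_g)$ and $x\in V(G)$, $d_{G\odot H}(u,h'')=d_G(g',g)+2$ whenever $u\in V(H_{g'})$ for $g'\neq g$, and distances inside $V(H_g)$ capped at $2$), one checks that every vertex $v\notin\{g,h,h'\}$ satisfies $d_{G\odot H}(v,e_1)=d_{G\odot H}(v,e_2)$: a vertex in $V(G)$ gives $d_G(v,g)$ on both sides, a vertex in $V(H_{g'})$ with $g'\neq g$ gives $d_G(g,g')+1$, and a vertex in $V(H_g)\setminus\{h,h'\}$ gives $1$. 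Thus $S$ excludes at most one vertex of $V(H_g)$, and summing $|S\cap V(H_g)|\geq|V(H)|-1$ over $g\in V(G)$ yields $|S|\geq|V(G)|(|V(H)|-1)$.

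For the upper bound I would choose any representative $h_g\in V(H_g)$ for each $g\in V(G)$ and set $S=\bigcup_{g\in V(G)}(V(H_g)\setminus\{h_g\})$, which has the desired cardinality. To show that $S$ is an edge metric generator I run a case analysis over two distinct edges $e_1,e_2$, each being an edge of $G$, a bridging edge of the form $gh$, or an edge inside some $H_g$. Most subcases dissolve by choosing a $v\in S$ in a well-picked $H$-copy: if $e_1$ and $e_2$ lie in different $H$-copies, any $v\in S\cap V(H_{g_1})$ with $e_1\subset H_{g_1}$ gives $d(v,e_1)\le 2$ versus $d(v,e_2)\ge 3$; if both lie in the same $H_g$, some endpoint of $e_1$ or of $e_2$ that is unshared must differ from $h_g$ (only one vertex is excluded per copy) and hence lies in $S$, witnessing the distinction by distance $0$ versus $\ge 1$; and for mixed pairs involving a $G$-edge or bridging edges, a vertex $v\in S\cap V(H_{g_0})$ for an appropriate $g_0$ outside the common vertex set separates them through the $+1$ shift in the distance formula.

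The main obstacle is the incidence subcase where $e_1=h_gh'\subset H_g$ uses the excluded vertex $h_g$ and $e_2=gh'$ is the bridging edge sharing the endpoint $h'$. Here no vertex of $V(H_g)\cap S$ is guaranteed to distinguish the pair, because such a candidate $v$ can have $d(v,e_1)=d(v,e_2)=1$. The fix is to step outside the copy: for any $g'\neq g$ pick $v\in S\cap V(H_{g'})$ and compute $d(v,e_1)=d_G(g,g')+2$ while $d(v,e_2)=d_G(g,g')+1$, so $v$ distinguishes them. This step tacitly requires $|V(G)|\ge 2$; when $|V(G)|=1$ the product $G\odot H$ coincides with $K_1\vee H$, whose edge metric dimension is provided by Theorem~\ref{join} and may differ from the stated formula, so this limiting case should be addressed separately. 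Once the obstacle subcase is handled, the remaining incidence subcases are immediate, so $S$ is an edge metric generator and the matching upper bound follows.
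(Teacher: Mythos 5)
Your proof is correct and follows essentially the same route as the paper: the identical lower-bound argument via pairs of bridging edges $gh,gh'$, the same generator $S=\bigcup_{g}(V(H_g)\setminus\{h_g\})$, and the same resolution of the critical incident pair ($h_gh'$ inside $H_g$ versus the bridging edge $gh'$) by stepping into a copy $H_{g'}$ with $g'\ne g$. Your side remark about $|V(G)|=1$ is well taken and worth recording: the paper's own proof invokes a vertex $x\in S\cap V(H_{g'})$ with $g'\ne g$ at exactly this point, and indeed $K_1\odot K_3\cong K_4$ has $\edim(K_4)=3\ne 1\cdot(3-1)$, so the argument (and the stated formula) genuinely requires $|V(G)|\ge 2$.
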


\begin{proof}
Let $G$ and $H$ be any graphs and let $n=|V(H)|\geq 2$. Let $g\in V(G)$ and let $\{g_1,\ldots,g_n\}$ be the set of vertices of the copy $H_g$ of $H$. Any two edges $gg_i$ and $gg_j$, $i\neq j$, have the same distance to all vertices form $V(G\odot H)-\{g_i,g_j\}$. Therefore, at least one of them must be in any  edge metric basis of $G\odot H$. Because $i,j$ and $g$, with $1\leq i<j\leq n$, are arbitrary, we see that every edge metric basis of $G\odot H$ must contain at least $n-1$ vertices from every copy $H_g$ of $H$ in $G\odot H$. This yields the lower bound  $\edim(G\odot H)\geq |V(G)|\cdot(|V(H)|-1)$.

On the other hand, let $H_g$ be a copy of $H$ corresponding to a vertex $g\in V(G)$. We will show that the set $S=\cup_{g\in V(G)} (V(H_g)-\{h\})$ is an edge metric generator for $G\odot H$, where $h$ is an arbitrary vertex of $V(H_g)$. First notice that $S$ is nonempty because $n\geq 2$, and by the same reason in every copy of $H$ exists at least one vertex from $S$. Any two different edges from one copy of $H$ are distinguished by some vertex in $S$, because at least two end vertices of these two edges are in $S$. The same argument also holds if both edges are in two different copies of $H$. Similarly, an edge from $G$ and one from any copy of $H$ are distinguished by at least one end-vertex of the edge lying in the copy of $H$ which is in $S$. Two different edges from $G$ are distinguished by at least one vertex $g\in V(G)$ and so, any vertex in $S\cap V(H_g)$ distinguishes these two edges. So, let now consider one edge $gh'$ with $g\in V(G)$ and $h'\in V(H_g)$. If the second edge is $gh_1$ for some $h_1\in V(H_g)$ and $h_1\neq h'$, then these two edges are distinguish by $h'$ or by $h_1$. If the second edge $g'g''$ belongs to $G$, then at least one end-vertex, say $g'$, is different from $g$ and they are distinguished by any vertex from $S\cap V(H_{g'})$. Any edge $gh'$ is also clearly distinguished from any edge with at least one end-vertex in other copy of $H$ by any vertex from $S$ in that copy. So, let finally the second edge $h_1h_2$ be from $H_g$. Any vertex $x\in S\cap H_{g'}$, where $g'\ne g$, distinguishes $gh'$ and $h_1h_2$ because $d_{G\odot H}(h_1h_2,x)=d_{G\odot H}(hg,x)+1$. Therefore, we have $\edim(G\odot H)\le |V(G)|\cdot(|V(H)|-1)$ whenever $n\geq 2$, and the equality follows.
\end{proof}

In contrast with the case above, the corona product graph $G\odot K_1$ is in general complicated to deal with. In order to observe this, the following terminology and notation will be required. A vertex of degree at least $3$ in a tree $T$ will be called a \emph{major vertex} of $T$.
Any leaf $u$ of $T$ is said to be a \emph{terminal vertex} of a major vertex $v$ of $T$ if
$d(u, v)<d(u,w)$ for every other major vertex $w$ of $T$. The \emph{terminal degree}  of
a major vertex $v$ is the number of terminal vertices of $v$. A major vertex $v$ of $T$ is an
\emph{exterior major vertex} of $T$ if it has positive terminal degree. Let $n_1(T)$ denote the number of leaves of $T$, and let $ex(T)$ denote the number of exterior major vertices of $T$. We can now state the formula for the edge metric dimension of a tree given in \cite{KeTrYe}. If $T$ is a tree that is not a path, then
\begin{equation}\label{formula-trees}\edim(T) = n_1(T) - ex(T).\end{equation}

Some situations can be easily deduced for $G\odot K_1$. For instance, if $G$ is the path $P_2$, then clearly $G\odot K_1\cong P_4$ and so, $\edim(P_2\odot K_1)=\edim(P_4)=1$. Also, if  $G$ is a path of order $n\ge 3$, then $G\odot K_1$ is a tree such that $n_1(G\odot K_1)=n$ and $ex(G\odot K_1)=n-2$. Thus, from (\ref{formula-trees}) we get $\edim(P_n\odot K_1)=2$.

\begin{theorem}
For any graph $G$, $\edim(G\odot K_1)\ge \edim(G)$, and this bound is sharp.
\end{theorem}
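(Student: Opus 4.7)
The plan is to take any edge metric basis $S$ of $G \odot K_1$ and project it to an edge metric generator of $G$ of size at most $|S|$. Writing $g'$ for the pendant leaf attached to $g \in V(G)$ in the corona, I will define
$$S' = (S \cap V(G)) \cup \{\, g \in V(G) : g' \in S \,\}.$$
Each element of $S$ contributes at most one vertex to $S'$, so $|S'| \le |S|$.

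The argument rests on two elementary distance identities in $G \odot K_1$: for any $u, v \in V(G)$,
$$d_{G \odot K_1}(u, v) = d_G(u, v),$$
because the pendants have degree one and cannot shorten a path between two non-pendant vertices; and for a leaf $g'$ and any $x \in V(G)$,
$$d_{G \odot K_1}(g', x) = 1 + d_G(g, x),$$
since every walk leaving $g'$ must begin with the unique pendant edge $g'g$. Taking the minimum over the two endpoints of an edge $e \in E(G) \subseteq E(G \odot K_1)$ yields $d_{G \odot K_1}(v, e) = d_G(v, e)$ for $v \in V(G)$, and $d_{G \odot K_1}(g', e) = 1 + d_G(g, e)$.

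Now fix any two distinct edges $e_1, e_2 \in E(G)$ and pick $v \in S$ that distinguishes them in $G \odot K_1$. If $v \in V(G)$, the first identity gives $d_G(v, e_1) \ne d_G(v, e_2)$ with $v \in S'$; if $v = g'$ is a pendant, the second identity gives $d_G(g, e_1) \ne d_G(g, e_2)$ with $g \in S'$. In either case $S'$ distinguishes $e_1$ and $e_2$, so $S'$ is an edge metric generator for $G$, yielding $\edim(G) \le |S'| \le |S| = \edim(G \odot K_1)$. For sharpness, $G = P_2$ gives $G \odot K_1 \cong P_4$, and both sides equal $1$. I do not anticipate a serious obstacle here: the whole proof reduces to the observation that the additive shift $d_{G \odot K_1}(g', \cdot) = 1 + d_G(g, \cdot)$ preserves strict inequalities, so replacing a pendant by its support does not lose distinguishing power.
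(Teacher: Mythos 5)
Your proof is correct and follows essentially the same route as the paper: project an edge metric basis of $G\odot K_1$ onto $V(G)$ by replacing each pendant leaf with its support vertex, and use the additive shift $d_{G\odot K_1}(g',e)=1+d_G(g,e)$ to conclude that distinguishing power is preserved. The only difference is the sharpness witness: you use $P_2$ (so that $G\odot K_1\cong P_4$ and both dimensions equal $1$), whereas the paper exhibits the family $K_n\odot K_1$ for $n\ge 3$; both are valid, though the paper's choice shows that equality is attained for arbitrarily large values of $\edim(G)$.
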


\begin{proof}
Let $V(G)=\{g_1,\dots,g_n\}$ and for every $g_i\in V(G)$, let $u_i$ be the vertex adjacent to $g_i$ corresponding to the copy of $K_1$ used in the corona product. Assume $S$ is an edge metric basis for $G\odot K_1$, and consider the set of vertices $S'=\{g_i\,:\,\{g_i,u_i\}\cap S\ne \emptyset\}$. We will prove that $S'$ is an edge metric generator for $G$. Let $e,f\in E(G)$ be any two edges and let $x\in S$ such that $d_G(e,x)\ne d_G(f,x)$. If $x\in V(G)$, then $x\in S'$ and so, $x$ determines $e$ and $f$. If $x\notin V(G)$, then $x$ is a vertex corresponding to a copy of $K_1$ and is adjacent to a vertex $x'\in V(G)$ which is also in $S'$. Thus,  $d_G(e,x')=d_{G\odot K_1}(e,x)-1\ne d_G(f,x)-1=d_{G\odot K_1}(f,x')$ and so, $x'$ distinguish $e$ and $f$. As a consequence, $S'$ is an edge metric generator for $G$ and the bound follows.

To see the sharpness of the bound we consider the graph $K_n\odot K_1$ where $n\ge 3$. Let $V(K_n)=\{g_1,\dots,g_n\}$ and, as above, for every $g_i\in V(K_n)$, let $u_i$ be the vertex adjacent to $g_i$ corresponding to the copy of $K_1$ used in the corona product. Assume $S$ is an edge metric basis for $K_n$ (note that an edge metric basis of $K_n$ is form by any set of $n-1$ vertices of $K_n$), and consider the set of vertices $S'=\{u_i\,:\,g_i\in S\}$. We shall prove $S'$ is a edge metric generator for $K_n\odot K_1$. Let $e,f\in E(K_n\odot K_1)$ be any two edges and consider the following situations.
\begin{itemize}[leftmargin=*]
  \item $e,f\in E(K_n)$. Since there exists a vertex $g_j\in S$ such that $d_{K_n}(e,g_j)\ne d_{K_n}(f,g_j)$, we deduce that
  $d_{K_n\odot K_1}(e,u_j)=d_{K_n}(e,g_j)+1\ne d_{K_n}(f,g_j)+1=d_{K_n\odot K_1}(f,u_j)$. As $u_j\in S'$, we have that $u_j$ recognizes $e,f$.

  \item $e\notin E(K_n)$ and $f\in E(K_n)$. Let $e=g_iu_i$ and $f=g_jg_k$. If $u_i\in S'$, then clearly $e,f$ are identified by $u_i$. If $u_i\notin S'$ and ($u_j\in S'$ or $u_k\in S'$), say $u_j\in S'$, then $e,f$ are distinguished by $u_j$. Since the case whether $u_i,u_j,u_k\notin S'$ is not possible, because there is only one vertex of $V(K_n)$ which is not in $S$, we are done with this case.

  \item $e,f\notin E(K_n)$. Let $e=g_iu_i$ and $f=g_ju_j$. Since there is only one vertex of $V(K_n)$ which is not in $S$, it must happen $u_i\in S'$ or $u_j\in S'$. Thus, $e,f$ are distinguished by $u_i$ or by $u_j$.
\end{itemize}
As a consequence of the cases above we obtain that $S'$ is an edge metric generator for $K_n\odot K_1$ and so, $\edim(K_n\odot K_1)\le \edim(K_n)$. Since $\edim(G\odot K_1)\ge \edim(G)$ for any graph $G$, we obtain the equality $\edim(K_n\odot K_1)=\edim(K_n)$ and the sharpness of the bound is completed.
\end{proof}

Although the bound above is tight, it is possible to observe that the difference between $\edim(G\odot K_1)$ and $\edim(G)$ can be arbitrarily large. To observe this, we consider a tree $T$ of order $n\ge 3$ which is not a path. Clearly, $n_1(T\odot K_1)=n$ and $ex(T\odot K_1)=n-n_1(T)$. As a consequence of (\ref{formula-trees}), $\edim(T\odot K_1)=n_1(T\odot K_1)-ex(T\odot K_1)=n-(n-n_1(T))=n_1(T)$ and so, $\edim(T\odot K_1)-\edim(T) = n_1(T)-(n_1(T)-ex(T))=ex(T)$, which can be as large as we would require.


\end{document}